\documentclass[11pt]{amsart}
\usepackage[headings]{fullpage}
\usepackage{xcolor,graphicx}
\usepackage[OT2,T1]{fontenc}
\usepackage{hyperref}
\usepackage{caption}
\usepackage{url}
\usepackage{pifont}
\usepackage{amsthm}
\usepackage{amsfonts}
\usepackage{amssymb}
\usepackage[mathscr]{euscript}
\usepackage[all]{xy}
\usepackage{amsmath}
\usepackage{epsfig}
\usepackage{latexsym}
\usepackage{rotating}
\usepackage{stackengine}
\makeatletter
\newcommand{\addresseshere}{%
  \enddoc@text\let\enddoc@text\relax
}
\makeatother

\usepackage[OT2,OT1]{fontenc} \newcommand\cyr
{
\renewcommand\rmdefault{wncyr} \renewcommand\sfdefault{wncyss} \renewcommand\encodingdefault{OT2} \normalfont
\selectfont
}
\DeclareTextFontCommand{\textcyr}{\cyr} 

\newcommand*\wbar[1]{
  \hbox{ \kern-0.2em%
    \vbox{%
      \hrule height 0.5pt  
      \kern0.25ex
      \hbox{%
        \kern-0.15em
        \ensuremath{#1}%
        \kern-0.05em
      }%
    }%
  \kern0.05em}%
} 

\newcommand*\wbarnew[1]{
  \hbox{ \kern-0.2em%
    \vbox{%
      \hrule height 0.5pt  
      \kern0.25ex
      \hbox{%
        \kern-0.35em
        \ensuremath{#1}%
        \kern-0.05em
      }%
    }%
  \kern0.05em}%
}


\pdfstringdefDisableCommands{%
\let\textcyr\relax}

\setlength{\parindent}{0pt}
 
\setlength{\parskip}{0ex}

\setlength{\footskip}{24pt}

\newtheorem{theorem}{Theorem}[section]
\newtheorem{lemma}[theorem]{Lemma}

\theoremstyle{definition}

\newtheorem{example}[theorem]{Example}
\theoremstyle{remark}
\newtheorem{remark}[theorem]{Remark}

\title{Hyperbolic Knots and Torsion in Khovanov Homology}
\author[M. Chrisman]{Micah Chrisman}
\author[S. Mukherjee]{Sujoy Mukherjee}
\address{Department of Mathematics, The Ohio State University, Columbus, Ohio, 43210}
\email{chrisman.76@osu.edu}
\email{sujoymukherjee.math@gmail.com}

\subjclass[2020]{Primary: 57K18, 57K32, Secondary: 57K10}

\keywords{Khovanov homology, hyperbolic knots and links, ribbon concordance, torsion}

\begin{document}

\begin{abstract}

In this note, we show that if there is a knot in $S^3$ having $\mathbb{Z}_m$ torsion in its Khovanov homology, then there are infinitely many hyperbolic knots and infinitely many prime satellite knots having $\mathbb{Z}_m$ torsion in their Khovanov homology. As an application, we give the first known examples of hyperbolic knots and links with odd (and other non-$\mathbb{Z}_2$ torsion) in their Khovanov homology.
\end{abstract}

\maketitle

\section{Introduction}

 Let $R$ be a commutative ring with identity and let $L$ be an oriented link in $S^3$. Khovanov's homology theory \cite{Kho} associates a bigraded chain complex over $R$ to $L,$ whose homology is an invariant of the link. The cohomology at bigrading $(i,j)$ is denoted by $\text{Kh}^{i,j}_R(L)$ and the collection of these groups is denoted $\text{Kh}_R(L)$. For $R=\mathbb{Z}$, we write $\text{Kh}(L)$ instead of $\text{Kh}_{\mathbb{Z}}(L)$.  In this case, the torsion part of the abelian group $\text{Kh}^{i,j}(L)$ is the direct sum of finite cyclic groups $\mathbb{Z}_m$. If $\mathbb{Z}_m$ appears as a direct summand of $\text{Kh}^{i,j}(L)$ for some bigrading $(i,j)$, we will say that $\text{Kh}(L)$ has $\mathbb{Z}_m$ torsion.
 
 \
 
 Torsion is a regular occurrence in integral Khovanov homology. Applications of Khovanov homology, however, typically employ its free part only. Famously, the graded Euler characteristic of Khovanov homology gives the Jones polynomial of $L$. It is in this sense that $\text{Kh}(L)$ categorifies the Jones polynomial.  Another application is the Rasmussen invariant \cite{rasmussen}, which gives a lower bound on the smooth slice genus. This invariant is defined only for Khovanov homology with rational coefficients.  The near ubiquity of torsion, together with its absence in application, suggests that the torsion subgroups in Khovanov homology have interesting geometric properties that have yet to be discovered.
 \newline
 
The most commonly observed torsion in Khovanov homology is $\mathbb{Z}_2$ torsion. Every non-split alternating link has either no torsion or only $\mathbb{Z}_2$ torsion (Shumakovitch \cite{Shu,Shu2}). Przytycki and Sazdanovi\'c \cite{PS} have conjectured that the Khovanov homology of a closed 3-braid also has only $\mathbb{Z}_2$ torsion. This was proved for 3-strand torus links by Chandler, Lowrance, Sazdanovi\'c, and Summers, who also established the result for a larger infinite family of closed $3$-braids \cite{Chandler_2021}. Far less common is $\mathbb{Z}_{2^s}$ torsion in Khovanov homology for $s>1$. Even so, $\mathbb{Z}_{2^s}$ torsion of large order is known to exist. The second author, together with Przytycki, Silvero, Wang, and Yang \cite{MPSWY}, found infinitely many links having $\mathbb{Z}_{2^s}$ torsion for $1 \leq s \leq 23$. 
\newline

Also less common than $\mathbb{Z}_2$ torsion is torsion of odd order. Until recently, only a finite number of links were known to have $\mathbb{Z}_m$ torsion for $m$ odd. For example, Bar-Natan \cite{BN07} found that the $T(5,6)$ torus knot has $\mathbb{Z}_3$ and $\mathbb{Z}_5$ torsion in its Khovanov homology. Infinite families of links containing $\mathbb{Z}_3$, $\mathbb{Z}_5$, and $\mathbb{Z}_7$ torsion were constructed in \cite{MPSWY}. In \cite{MS}, Sch\"{u}tz and the second author proved that there are links having $\mathbb{Z}_{p^k}$ torsion for all $k \ge 1$ and $p=3,5,7$. The second author showed in \cite{Muk} that the connected sum $T(5,6)\# T(5,6)$ has $\mathbb{Z}_9$ torsion while $T(5,6)\# T(5,6)\# T(5,6) $ has $\mathbb{Z}_{27}$ torsion. Sch\"{u}tz \cite{schutz} has also shown that the torus knots $T(9,10)$ and $T(9,11)$ have $\mathbb{Z}_9$ and $\mathbb{Z}_{27}$ torsion.
\newline

Now, if $L$ is a non-split prime alternating link that is not a torus link, then $L$ is hyperbolic (Menasco \cite{menasco}). As discussed above, non-split alternating links can only have $\mathbb{Z}_2$ torsion in their Khovanov homology. On the other hand, the examples of non-$\mathbb{Z}_2$ torsion previously mentioned are constructed from torus links, connected sums of torus links, and certain generalized twisted torus links\footnote{This terminology is due to Birman and Koffman \cite{BK}.}. The third type of example is obtained from the braid representation $(\sigma_{m-1} \sigma_{m-2} \cdots \sigma_1)^n$ of $T(m,n)$ by multiplying on the right by $\sigma_1^k$. Here $\sigma_i$ denotes the $i$-th standard generator of the braid group on $m$ strings. Twisted torus links can often be shown to be satellite links (see e.g. \cite{MPSWY}, Figure 3.5). This leads to the natural question: 
\newline 

\centerline{``Are there any hyperbolic links having non-$\mathbb{Z}_2$ torsion in their Khovanov homology?''}
\hspace{1cm}\newline
The purpose of this note is to answer the above question in the affirmative. We prove that there are infinitely many inequivalent hyperbolic knots and links having non-$\mathbb{Z}_2$ torsion in Khovanov homology. Likewise, we show that there are infinitely many inequivalent prime satellite knots having non-$\mathbb{Z}_2$ torsion in their Khovanov homology. Our main tool will be the result of Levine and Zemke \cite{levine_zemke}, that the Khovanov homology functor applied to a ribbon concordance is injective. The results then follow from some well-known facts about knot and link concordance. 
\newline

This paper is organized as follows. In Section \ref{sec_prelim}, we give a brief review of Khovanov homology (Section \ref{subsec_KH}) and ribbon concordance (Section \ref{subsec_ribbon}). Our main results are proved in Section \ref{sec_proofs}.

\section{Preliminaries} \label{sec_prelim}

\subsection{A brief description of Khovanov homology} \label{subsec_KH}

For a comprehensive description of Khovanov homology, the reader is directed to \cite{BN,Tur,Vir}. Our description follows Viro's approach \cite{Vir}. Start with an oriented link diagram $D$. Next, forgetting the orientation, associate to the unoriented link diagram $|D|,$ a bigraded chain complex whose homology is an invariant of the link under regular isotopy. Hence, we first categorify the unreduced Kauffman bracket polynomial \cite{Kau}. Next, the correspondence between this version and the standard version is established by introducing an orientation on the link diagram and calculating its writhe.

\

Choose a diagram $|D|$ for an unoriented link $|L|,$ and let $C(|D|)$ denote its set of crossings. Let $$ks:C(|D|) \longrightarrow \{ A, B \}$$ be a  function defined by associating either an $A$ marker or a $B$ marker to each crossing of $|D|.$ For each function $ks,$ following the conventions in Figure \ref{smoothing_convention}, smooth the crossings of $|D|$ to obtain a system of circles on the plane. This is called a {\it Kauffman state}. Therefore, there is a bijective correspondence between the functions $ks$ and the Kauffman states of $|L|.$ Assuming $|D|$ has $n$ crossings, there are $2^n$ Kauffman states and functions $ks$. Finally, let $\sigma(ks) = | ks^{-1}(A) | - | ks^{-1}(B) |.$

\begin{figure}[ht]
\centering
\includegraphics[scale = 0.15]{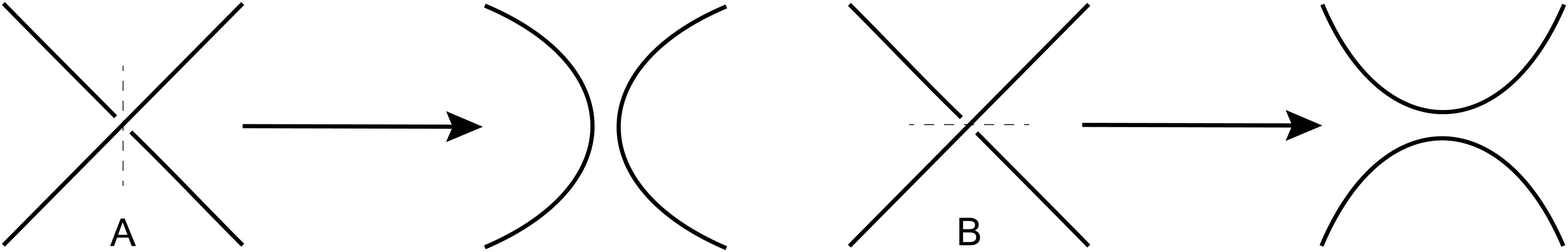}
\caption{Smoothing conventions.}
\label{smoothing_convention}
\end{figure}

Let $c(ks)$ denote the set of circles of the Kauffman state associated to $ks.$ Let $$eks: c(ks) \longrightarrow \{1,-1\}$$ be a function, called an {\it enhanced Kauffman state}, defined by associating either a $1$ or a $-1$ to each circle of the Kauffman state. Furthermore, let $\displaystyle \tau(eks) = \sum_{x \in c(ks)} eks(x).$

\

We are now ready to define the chain modules and the boundary maps. First, let $EKS(|D|)$ denote the set of enhanced Kauffman states of $|D|.$ For $eks \in EKS(|D|),$ let $ks$ be the function corresponding to the Kauffman state obtained from $eks$ by forgetting the enhancements. Define $$ a(eks) = \sigma(ks)\qquad \text{and} \qquad b(eks) = \sigma(ks) + 2\cdot \tau(eks).$$ Next, let $C_{a,b}(|D|)$ be the free $R$-module with generators $$\{ eks \in EKS(|D|) \mid a(eks) = a \text{ and } b(eks) = b \}.$$ Before defining the boundary maps, we need the notion of adjacency of enhanced Kauffman states. Let $eks, eks' \in EKS(|D|).$ Then, $eks'$ is said to be {\it adjacent} to $eks$ if $b(eks) = b(eks'),$ $a(eks') = a(eks) - 2,$ and their corresponding Kauffman states, $ks$ and $ks',$ differ only at one crossing $y \in C(|D|)$ with $ks(y) = A$ and $ks'(y) = B.$

\

After ordering the crossings of $|D|,$ for $b \in \mathbb{Z}$ (that is, for a fixed quantum grading), the boundary maps $\partial_{a,b}:C_{a,b}(|D|) \longrightarrow C_{a-2,b}(|D|)$ are defined as $$ \partial_{a,b}(eks) = \sum_{eks' \in EKS(|D|)} (eks:eks')eks'.$$ Here, if $eks'$ is not adjacent to $eks,$ $(eks:eks') = 0;$ else, $(eks:eks') = (-1)^{\alpha},$ where $\alpha$ is the number of $B$-labeled crossings that occur in $eks$ after the only crossing at which $ks$ and $ks'$ differ. Finally, the homology groups are defined in the usual way, $\displaystyle \text{Kh}_{a,b}^R(|D|) = \frac{ker(\partial_{a,b})}{im(\partial_{a + 2,b})}.$ So far, these groups are invariant under only Reidemeister moves 2 and 3.

\

The path to the standard version of Khovanov cohomology is now quite simple. After reintroducing the given orientation on $|D|,$ and denoting it by $D,$ $$ \text{Kh}^{i,j}_R(D) = \text{Kh}_{w(D) - 2i,3w(D) - 2j}^R(|D|),$$ where $w(D) = \# \text{positive crossings} - \# \text{negative crossings},$ is called the {\it writhe} of $D.$

\subsection{Ribbon Concordance} \label{subsec_ribbon}

Let $K_0,K_1$ be oriented knots in $S^3$. A \emph{concordance from $K_0$ to $K_1$} is a smoothly embedded annulus $A \subset S^3 \times I$ such that $\partial A=K_1 \times \{1\} \sqcup -K_0 \times \{0\}$. A concordance of an $m$-component link $L_0=K_{0,1} \cup \cdots \cup K_{0,m}$ to a $m$-component link $L_1=K_{1,1} \cup \cdots \cup K_{1,m}$ is a collection $A=A_1 \cup \cdots \cup A_m$ of concordances $A_i$ from $K_{0,i}$ to $K_{1,i}$ for $1 \le i \le m$ such that $A_i \cap A_j =\varnothing$ for all $i \ne j$. Since a concordance $A$ between two knots is required to be smooth, it may be assumed that the projection $S^3 \times [0,1] \longrightarrow [0,1]$ restricts to a Morse function on $A$. Each time slice of $A$ corresponding to a regular value of the Morse function is either a knot or a link. The index $0$ critical points, i.e. those of the form $x^2+y^2$, correspond to \emph{births}. This move is of the form $L \longrightarrow L \cup \bigcirc$, where an unknotted component $\bigcirc$ is added to the link diagram $L$ so that it is unlinked from $L$. Index $1$ critical points (i.e. those of the form $x^2-y^2$) correspond to saddle moves. Index $2$ critical points (i.e. those of the form $-x^2-y^2$) are called \emph{deaths}. They are the inverse to births. Concordance is an equivalence relation on oriented knots.
\newline

\begin{figure}[htb]
\begin{tabular}{c} \def\svgwidth{5in}
\begingroup%
  \makeatletter%
  \providecommand\color[2][]{%
    \errmessage{(Inkscape) Color is used for the text in Inkscape, but the package 'color.sty' is not loaded}%
    \renewcommand\color[2][]{}%
  }%
  \providecommand\transparent[1]{%
    \errmessage{(Inkscape) Transparency is used (non-zero) for the text in Inkscape, but the package 'transparent.sty' is not loaded}%
    \renewcommand\transparent[1]{}%
  }%
  \providecommand\rotatebox[2]{#2}%
  \newcommand*\fsize{\dimexpr\f@size pt\relax}%
  \newcommand*\lineheight[1]{\fontsize{\fsize}{#1\fsize}\selectfont}%
  \ifx\svgwidth\undefined%
    \setlength{\unitlength}{561.09006706bp}%
    \ifx\svgscale\undefined%
      \relax%
    \else%
      \setlength{\unitlength}{\unitlength * \real{\svgscale}}%
    \fi%
  \else%
    \setlength{\unitlength}{\svgwidth}%
  \fi%
  \global\let\svgwidth\undefined%
  \global\let\svgscale\undefined%
  \makeatother%
  \begin{picture}(1,0.39681493)%
    \lineheight{1}%
    \setlength\tabcolsep{0pt}%
    \put(0,0.05){\includegraphics[width=\unitlength]{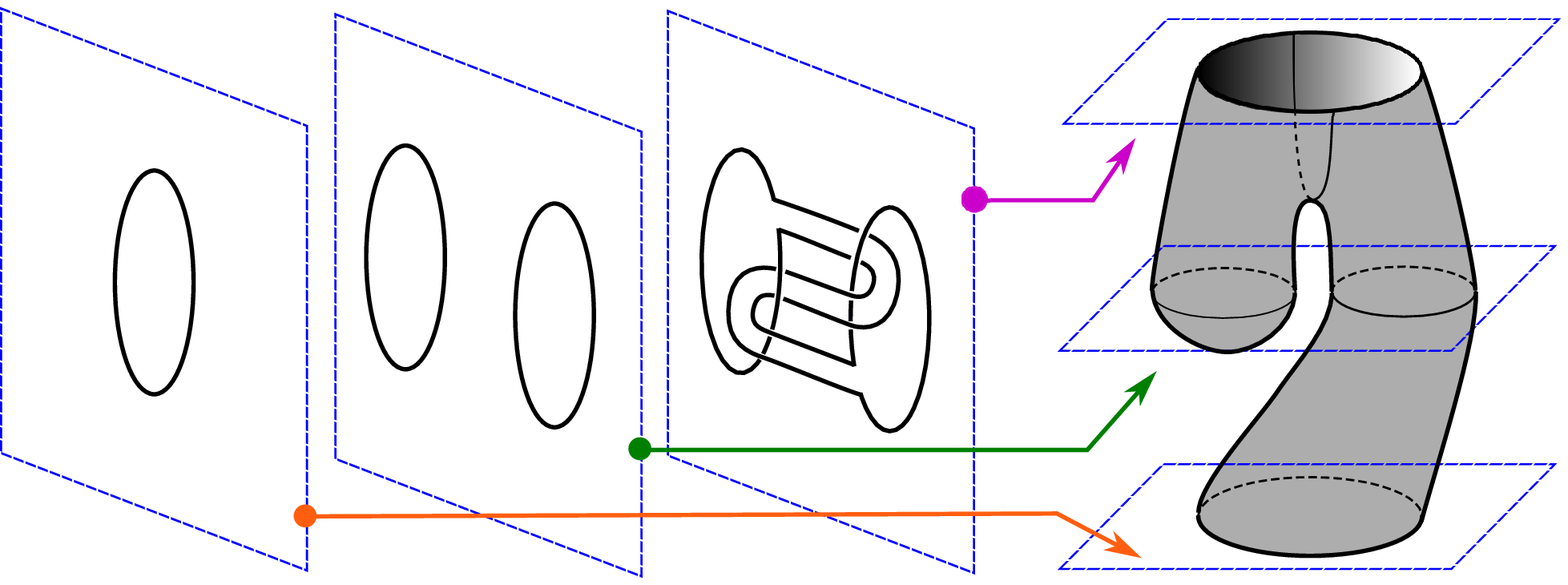}}%
    \put(0.16983545,0.00261941){\color[rgb]{0,0,0}\makebox(0,0)[lt]{\lineheight{40.54999924}\smash{\begin{tabular}[t]{l}$t=0$\end{tabular}}}}%
    \put(0.39386413,0.00954815){\color[rgb]{0,0,0}\makebox(0,0)[lt]{\lineheight{40.54999924}\smash{\begin{tabular}[t]{l}$t=\tfrac{1}{2}$\end{tabular}}}}%
    \put(0.59710667,0.01070292){\color[rgb]{0,0,0}\makebox(0,0)[lt]{\lineheight{40.54999924}\smash{\begin{tabular}[t]{l}$t=1$\end{tabular}}}}%
  \end{picture}%
\endgroup%
 \end{tabular}
\caption{A ribbon concordance consisting of a birth $t=\tfrac{1}{4}$ and a saddle $t=\tfrac{3}{4}$.} \label{fig_ribbon_concordance}
\end{figure} 

A concordance from $K_0$ to $K_1$ is called a \emph{ribbon concordance}\footnote{Our concordances will go from $K_0$ to $K_1$, as in Zemke \cite{zemke}, rather than from $K_1$ to $K_0$, as in Gordon \cite{gordon_ribbon}.} if the Morse function has only critical points of index $0$ and $1$. In this case, we will write $K_0 \le K_1$. Figure \ref{fig_ribbon_concordance} shows a movie of a ribbon concordance with one birth and one saddle. The annulus $A$ is depicted schematically in Figure \ref{fig_ribbon_concordance}, right. Note that for any ribbon knot $K$, there is a ribbon concordance from the unknot to $K$. Ribbon concordance was first studied by Gordon \cite{gordon_ribbon}, who conjectured that ribbon concordance defines a partial ordering on knots in $S^3$. The relation is easily seen to be reflexive and transitive. A proof that $\le$ is antisymmetric was only recently announced by Agol \cite{agol_ribbon}. In the interim, many authors investigated the effect of ribbon concordance on various knot invariants. See, for example, Gordon \cite{gordon_ribbon}, Gilmer \cite{gilmer_ribbon}, Miyazaki \cite{miyazaki_ribbon}, and Silver \cite{silver_ribbon}.
\newline
\newline 
Given an arbitrary concordance $A$ of links from $L_0$ to $L_1$, there is a  linear map $\text{Kh}_R(A):\text{Kh}_R(L_0) \longrightarrow \text{Kh}_R(L_1)$. The map $\text{Kh}_R(A)$ is a functor on the cobordism category of links. For more details, see Bar-Natan \cite{bar_natan_tangle}, Caprau \cite{caprau_tangle}, and Clark-Morrison-Walker \cite{CMW_fixing}. When $A$ is a ribbon concordance from $L_0$ to $L_1$, the following result of Levine and Zemke shows that $\text{Kh}_R(A)$ is an embedding.

\begin{theorem}[Levine-Zemke \cite{levine_zemke}, Theorem 1]\label{thm_levine_zemke} If $A$ is a ribbon concordance from $L_0$ to $L_1$, then $\text{Kh}_R(A):\text{Kh}_R(L_0) \longrightarrow \text{Kh}_R(L_1)$ is an injective and grading-preserving linear map. Furthermore, for any bigrading $(i,j)$, $\text{Kh}^{i,j}_R(L_0)$ is a direct summand of $\text{Kh}^{i,j}_R(L_1)$.
\end{theorem}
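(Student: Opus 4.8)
The plan is to construct an explicit one-sided inverse to $\text{Kh}_R(A)$. Reversing the concordance---taking the image of $A$ under the reflection $(x,t)\mapsto(x,1-t)$ of $S^3\times I$---produces a cobordism $\bar A$ from $L_1$ to $L_0$; since $A$ has only critical points of index $0$ and $1$, the reflection $\bar A$ has only critical points of index $2$ and $1$, so $\bar A$ is again a (non-ribbon) concordance. Applying the Khovanov cobordism functor and using its compatibility with composition \cite{bar_natan_tangle,CMW_fixing}, one gets $\text{Kh}_R(\bar A)\circ\text{Kh}_R(A)=\text{Kh}_R(\bar A\circ A)$, an endomorphism of $\text{Kh}_R(L_0)$. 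The whole theorem then follows from the single claim that this endomorphism is the identity.

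The main obstacle is proving that claim, and it is here that the ribbon hypothesis is used in an essential way. I would present $A$ as a movie that first performs $g$ births, producing $L_0$ together with $g$ small unknots, and then performs $g$ saddle (band) moves along disjoint bands $\beta_1,\dots,\beta_g$ to arrive at $L_1$; the number of births equals the number of saddles because every component of $A$ is an annulus, hence has Euler characteristic $0$. The reversed movie of $\bar A$ then runs the $g$ bands backwards (as ``splitting'' saddles) and finishes with $g$ deaths, so $\bar A\circ A$ is presented by a movie in which every band $\beta_i$ is used once as a merge and later, geometrically dualled, once as a split. Reading off the induced chain map through the Frobenius algebra $\mathcal{A}=R[X]/(X^2)$ underlying Khovanov's TQFT---births insert the unit, merges apply the multiplication $m$, splits apply the comultiplication $\Delta$, deaths apply the counit $\varepsilon$---one expects each born circle to contribute a factor $m(-\otimes 1)=\mathrm{id}$ on the way up and a factor $(\mathrm{id}\otimes\varepsilon)\circ\Delta=\mathrm{id}$ on the way down, so that the whole composite telescopes to the identity. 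Making this rigorous is the real work: one must pin down the order of the moves (using that disjoint births, deaths, and saddles commute), handle the cases where the bands interact with one another (invoking coassociativity), and---most delicately---keep track of signs, which forces one to work inside the $\mathrm{SO}(3)$--equivariant/disoriented functorial model of Bar-Natan and Clark--Morrison--Walker rather than with the naive cobordism relations; an alternative is to carry out the same cancellation abstractly in the Bar-Natan cobordism category using the sphere and neck-cutting relations.

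Granting $\text{Kh}_R(\bar A\circ A)=\mathrm{id}$, the stated conclusions are formal. Injectivity of $\text{Kh}_R(A)$ is immediate, with left inverse $\text{Kh}_R(\bar A)$. Grading-preservation holds because a cobordism map in Khovanov homology always has homological degree $0$ and shifts quantum degree by the Euler characteristic of the cobordism, and $\chi(A)=0$ since $A$ is a disjoint union of annuli. Finally, for any $(i,j)$ the composite of $\text{Kh}^{i,j}_R(A)$ with $\text{Kh}^{i,j}_R(\bar A)$ being the identity means $e:=\text{Kh}^{i,j}_R(A)\circ\text{Kh}^{i,j}_R(\bar A)$ is an idempotent $R$-linear endomorphism of $\text{Kh}^{i,j}_R(L_1)$, whence $\text{Kh}^{i,j}_R(L_1)=\operatorname{im}(e)\oplus\ker(e)$ with $\text{Kh}^{i,j}_R(A)$ restricting to an isomorphism $\text{Kh}^{i,j}_R(L_0)\xrightarrow{\ \cong\ }\operatorname{im}(e)$; thus $\text{Kh}^{i,j}_R(L_0)$ is a direct summand of $\text{Kh}^{i,j}_R(L_1)$.
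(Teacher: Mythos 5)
The paper does not prove this statement; it is quoted as Theorem~1 of Levine--Zemke \cite{levine_zemke}, so there is no internal proof to compare against. Measured against the actual Levine--Zemke argument, your outline is structurally correct: reverse the concordance to obtain $\bar A$, prove $\text{Kh}_R(\bar A)\circ\text{Kh}_R(A)=\text{id}$, and then derive injectivity, grading preservation (via $\chi(A)=0$), and the direct-summand statement from the induced idempotent $e=\text{Kh}_R(A)\circ\text{Kh}_R(\bar A)$ exactly as you do. Those formal deductions are all fine.

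Where your sketch is imprecise is in the central claim, and this is where you and the cited proof part ways in method. Levine--Zemke do not telescope a movie birth-by-birth. They instead invoke a purely topological lemma of Zemke (\cite{zemke}, Lemma~3.2): if $A$ is a ribbon concordance with $n$ births, then the composite surface $\bar A\circ A\subset S^3\times I$ is isotopic rel boundary to the identity cobordism $L_0\times I$ with $n$ unknotted, unlinked $2$-spheres tubed on; this is the place the ribbon hypothesis does its work, by letting the $1$-handles of $A$ cancel the dual $1$-handles of $\bar A$. Once in this normal form, the algebraic step is a one-line computation in Bar--Natan's dotted cobordism category using the sphere and neck-cutting relations, and each tube-to-sphere contributes the identity. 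This is precisely the ``alternative'' you mention in passing at the end of your middle paragraph --- it is not an alternative but the actual proof, and it sidesteps the ordering, interleaving, and coassociativity bookkeeping that your direct birth/merge/split/death telescoping would otherwise require. (Your local computation $(\text{id}\otimes\varepsilon)\circ\Delta\circ m\circ(\text{id}\otimes\iota)=\text{id}$ for a single well-separated band is the same computation as the tube-to-sphere evaluation; the content of Zemke's lemma is that one can always reduce to disjoint such local pictures.) Your caution about signs and the need to work in a strictly functorial model is well placed, though for the conclusions in this theorem knowing $\text{Kh}_R(\bar A)\circ\text{Kh}_R(A)=\pm\,\text{id}$ already suffices.
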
 

Theorem \ref{thm_levine_zemke} has been shown to hold for many link homology theories. It was first proved for knot Floer homology by Zemke \cite{zemke}. In particular, a ribbon concordance $A$ from a knot $K_0$ to a knot $K_1$ gives an injective linear map $F_A:\widehat{HFK}(K_0) \longrightarrow \widehat{HFK}(K_1)$ that preserves the grading. Here homology is calculated over $\mathbb{Z}_2$. In \cite{daemi_ribbon}, Daemi et al. prove that any flavor of Heegard-Floer homology applied to the 2-fold branch cover of a ribbon concordance also induces an injection. Caprau et al. \cite{caprau_ribbon} showed that Theorem \ref{thm_levine_zemke} holds for the $\mathfrak{sl}(n)$-homology functor for all $n \ge 2$. An axiomatic TQFT approach was used by Kang \cite{kang_ribbon} to prove Theorem \ref{thm_levine_zemke} for a wide class of link homology theories.  

\section{Main Results} \label{sec_proofs} In \cite{myers}, Myers proved that every link is concordant to a hyperbolic link. We will modify this argument to prove our main result. Here we consider Khovanov homology over a fixed ring $R$.

\begin{lemma}[Hyperbolic Subway Coupon Lemma] \label{lemma_hyp_subway} Let $M$ be an $R$-module. If there is an oriented link $L$ with $M$ a submodule of $\text{Kh}_R^{i,j}(L)$ for some grading $(i,j)$, then there are infinitely many distinct hyperbolic links $L'$ such that $M$ is a submodule of $\text{Kh}_R^{i,j}(L')$.
\end{lemma}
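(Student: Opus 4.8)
The plan is to reduce the statement, via Theorem~\ref{thm_levine_zemke}, to a purely three-dimensional assertion and then to obtain that assertion by running Myers' hyperbolization procedure through ribbon concordances. For the reduction: if $A$ is a ribbon concordance from $L$ to a link $L'$, then by Theorem~\ref{thm_levine_zemke} the module $\text{Kh}_R^{i,j}(L)$ is a direct summand of $\text{Kh}_R^{i,j}(L')$, hence it contains $M$ as a submodule and therefore $M$ is a submodule of $\text{Kh}_R^{i,j}(L')$. Moreover $\le$ is transitive: stacking two ribbon concordances and perturbing to a generic Morse function gives a concordance whose critical points still have index $0$ and $1$ only. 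Hence it suffices to produce infinitely many pairwise distinct hyperbolic links $L'$ with $L \le L'$.

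\textbf{Hyperbolization through ribbon concordances.} Recall that Myers \cite{myers} hyperbolizes a link exterior by choosing finitely many disjoint balls, each meeting the link in a trivial arc, and replacing those trivial tangles by tangles whose exteriors are ``excellent'' (irreducible, atoroidal, anannular, boundary-irreducible, not a product), the replacements being chosen concordant rel $\partial$ to the original arcs; with enough replacements, distributed along a diagram so as to obstruct every essential sphere, torus, and annulus, the resulting exterior is hyperbolic. The modification needed here is to choose each local replacement so that the rel-$\partial$ concordance from the trivial arc to the excellent tangle has only critical points of index $0$ and $1$ --- a ribbon concordance rel boundary, with the births inside each ball cancelled by merging saddles so that component numbers are preserved. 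Gluing these local pieces into the product cobordism $L \times I$ then yields a ribbon concordance from $L$ to a hyperbolic link $L^{(1)}$, so $L \le L^{(1)}$.

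\textbf{Infinitely many.} Apply the previous step to $L^{(1)}$, inserting the excellent tangle in a fresh ball so as to genuinely change the link --- for instance so that the hyperbolic volume strictly increases --- to get a hyperbolic link $L^{(2)}$ with $L^{(1)} \le L^{(2)}$, hence $L \le L^{(2)}$ by transitivity. Iterating produces a chain $L \le L^{(1)} \le L^{(2)} \le \cdots$ of hyperbolic links; since a suitable invariant (e.g.\ hyperbolic volume) can be arranged to be strictly monotone along the chain, the $L^{(k)}$ are pairwise distinct. Each satisfies $M \le \text{Kh}_R^{i,j}(L^{(k)})$ by the reduction step, which proves the lemma.

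\textbf{Main obstacle.} The Khovanov-homological content is the single invocation of Theorem~\ref{thm_levine_zemke}; the real work is topological. The crux is the hyperbolization step: exhibiting local tangle replacements that are simultaneously ribbon concordances rel boundary and, assembled in the right configuration, strong enough to kill all essential spheres, tori, and annuli in the exterior, so that Myers' argument survives passage to the ribbon category. A secondary point is verifying that the iteration keeps the links hyperbolic and that some invariant genuinely forces infinitely many distinct isotopy types, rather than an infinite chain collapsing under the partial order.
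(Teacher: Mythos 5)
Your overall architecture is the right one — reduce via Theorem~\ref{thm_levine_zemke} and transitivity of $\le$ to the statement that $L$ is ribbon concordant to infinitely many distinct hyperbolic links, then use a Myers-style tangle replacement to realize those concordances — and this matches the paper's strategy. However, there are two genuine gaps where your sketch waves at something that the paper actually does with a specific device.

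First, the hyperbolization step. You assert that the local tangle replacements can be chosen to be ``ribbon concordances rel boundary'' from trivial arcs to excellent tangles, but you never exhibit one. This is not a formality: the whole lemma hinges on producing a concrete prime, atoroidal tangle with trivial-arc boundary data together with a rel-$\partial$ cobordism to the trivial tangle having only index-$0$ and index-$1$ critical points. The paper names the Kinoshita--Terasaka tangle (prime by Bleiler, atoroidal by Soma) and displays an explicit movie (one birth, one saddle) realizing the ribbon concordance; it then invokes Myers' Proposition 6.1 to place the arc $\tilde{J}$ so the complement is simple Haken, Myers' Lemma 2.5 to conclude the joined tangle still has simple Haken exterior, and Thurston for hyperbolicity. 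Without a concrete tangle and an explicit movie, your hyperbolization step is an assertion, not a proof.

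Second, and more seriously, your mechanism for getting infinitely many distinct links is unsupported. You iterate $L \le L^{(1)} \le L^{(2)} \le \cdots$ and claim that hyperbolic volume ``can be arranged'' to be strictly monotone. Nothing in the setup guarantees this: drilling out an arc and inserting an atoroidal tangle does not obviously increase the volume of the resulting hyperbolic structure, and no gluing/Dehn-filling inequality is cited that would control it. The paper avoids this entirely by a different and much cleaner device: before hyperbolizing, it replaces $K$ by $K_n = K \# 6_1^{\# n}$, which is ribbon concordant to $K$ (because $6_1$ is a ribbon knot) and has Alexander polynomial $\Delta_K\cdot \Delta_{6_1}^n$. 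The Kinoshita--Terasaka tangle is specifically chosen because replacing a trivial tangle by it preserves the Alexander polynomial (Bleiler, Theorem 2.2), so the hyperbolized knots $K_n'$ inherit pairwise distinct Alexander polynomials directly, with no volume estimate needed. In short: your plan needs both a concrete ribbon tangle replacement and a proved distinguishing invariant, and it supplies neither; the paper's connected-sum-with-$6_1$ plus KT-tangle construction delivers both at once.
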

\begin{proof} Suppose first that $L=K$ is a knot. Let $J_0$ be a ribbon knot having Alexander polynomial not equal to 1. For example, take $J_0$ to be the stevedore's knot $6_1$. Let $n >0$ be an integer. Since $J_0$ is ribbon, there is a knot $K_n$ such that $K \le K_n$ and $K_n=K \# J_0 \# J_0 \#\cdots \# J_0$ is a connected sum of $K$ with $n$ copies of $J_0$. The Alexander polynomial of $K_n$ is $\Delta_{K_n}(t)=\Delta_{K}(t)\cdot (\Delta_{J_0}(t))^n$ and hence there is an infinite family of knots $K_n$ such that $K \le K_n$ but $K_n$ is not equivalent to $K$.
\newline

Next we modify $K_n$ to a hyperbolic knot $K_n'$ that is ribbon concordant to $K$ and has the same Alexander polynomial as $K_n$. Let $X_n$ be the exterior of $K_n$ in $S^3$. Let $a_1,a_2$ be disjoint arcs of $K_n$. By Myers \cite{myers}, Proposition 6.1, there is a properly embedded arc $\tilde{J}$ in $X_n$ such that $\overline{X_n\smallsetminus N(\tilde{J}})$ is a simple Haken $3$-manifold, where $N(\tilde{J})$ is a regular neighborhood of $\tilde{J}$ in $X_n$. The arc $\tilde{J}$ may be chosen so that one component of $\partial \tilde{J}$ lies in the boundary of a regular neighborhood $N_1$ of $a_1$ while the other component lies in the boundary of a regular neighborhood $N_2$ of $a_2$. Note that $B=N(\tilde{J}) \cup N_1 \cup N_2$ is a 3-ball and that $(B,a_1 \cup a_2)$ is a trivial tangle. See Figure \ref{fig_nbhd}. Let $(T_n,t_n)$ be the $2$-tangle formed by deleting the interior of $B$ from $S^3$ and the interior of the arcs $a_1,a_2$ from $K_n$.
\newline

\begin{figure}[htb]
\begin{tabular}{c} \def\svgwidth{3in}
\begingroup%
  \makeatletter%
  \providecommand\color[2][]{%
    \errmessage{(Inkscape) Color is used for the text in Inkscape, but the package 'color.sty' is not loaded}%
    \renewcommand\color[2][]{}%
  }%
  \providecommand\transparent[1]{%
    \errmessage{(Inkscape) Transparency is used (non-zero) for the text in Inkscape, but the package 'transparent.sty' is not loaded}%
    \renewcommand\transparent[1]{}%
  }%
  \providecommand\rotatebox[2]{#2}%
  \newcommand*\fsize{\dimexpr\f@size pt\relax}%
  \newcommand*\lineheight[1]{\fontsize{\fsize}{#1\fsize}\selectfont}%
  \ifx\svgwidth\undefined%
    \setlength{\unitlength}{279.28918892bp}%
    \ifx\svgscale\undefined%
      \relax%
    \else%
      \setlength{\unitlength}{\unitlength * \real{\svgscale}}%
    \fi%
  \else%
    \setlength{\unitlength}{\svgwidth}%
  \fi%
  \global\let\svgwidth\undefined%
  \global\let\svgscale\undefined%
  \makeatother%
  \begin{picture}(1,0.67178431)%
    \lineheight{1}%
    \setlength\tabcolsep{0pt}%
    \put(0,0){\includegraphics[width=\unitlength]{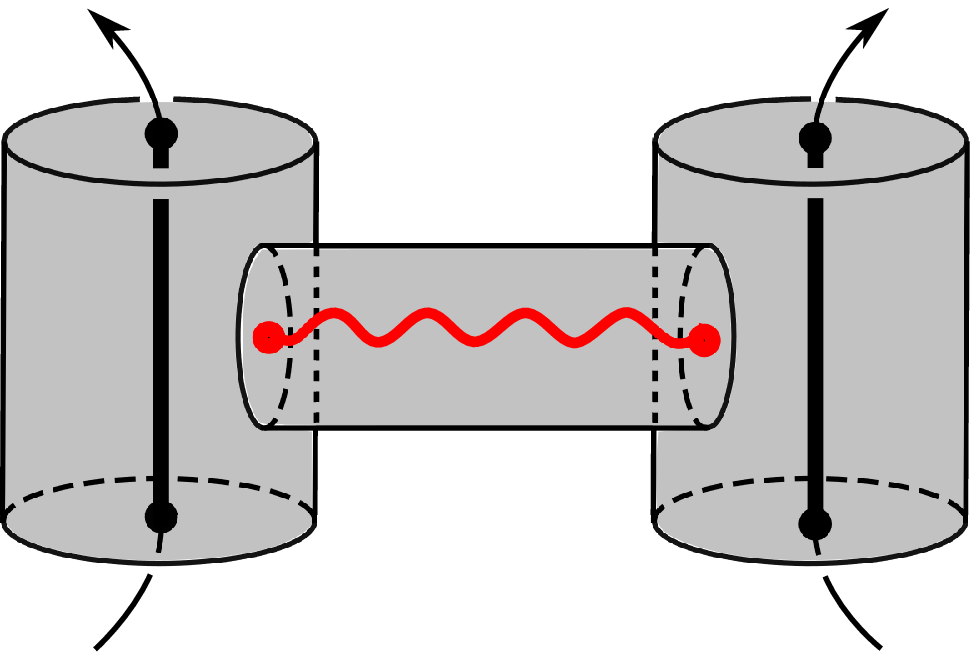}}%
    \put(0.08892417,0.31407538){\color[rgb]{0,0,0}\makebox(0,0)[lt]{\lineheight{40.54999924}\smash{\begin{tabular}[t]{l}$a_1$\end{tabular}}}}%
    \put(0.86736916,0.31146317){\color[rgb]{0,0,0}\makebox(0,0)[lt]{\lineheight{40.54999924}\smash{\begin{tabular}[t]{l}$a_2$\end{tabular}}}}%
    \put(0.45724873,0.25660625){\color[rgb]{0,0,0}\makebox(0,0)[lt]{\lineheight{40.54999924}\smash{\begin{tabular}[t]{l}$\tilde{J}$\end{tabular}}}}%
    \put(0.14900547,0.00844427){\color[rgb]{0,0,0}\makebox(0,0)[lt]{\lineheight{40.54999924}\smash{\begin{tabular}[t]{l}$K_n$\end{tabular}}}}%
    \put(0.25349475,0.60142084){\color[rgb]{0,0,0}\makebox(0,0)[lt]{\lineheight{40.54999924}\smash{\begin{tabular}[t]{l}$N_1$\end{tabular}}}}%
    \put(0.6583906,0.59880861){\color[rgb]{0,0,0}\makebox(0,0)[lt]{\lineheight{40.54999924}\smash{\begin{tabular}[t]{l}$N_2$\end{tabular}}}}%
    \put(0.42590198,0.45252364){\color[rgb]{0,0,0}\makebox(0,0)[lt]{\lineheight{40.54999924}\smash{\begin{tabular}[t]{l}$N(\tilde{J})$\end{tabular}}}}%
  \end{picture}%
\endgroup%
 \end{tabular}
\caption{Construction used in the proof of Lemma \ref{lemma_hyp_subway}.} \label{fig_nbhd}
\end{figure} 

Let $KT=(B',t')$ denote the Kinoshita-Terasaka tangle (see Figure \ref{fig_kt_movie}, bottom right). The $KT$ tangle is prime (Bleiler \cite{bleiler}) and atoroidal (Soma \cite{soma}). Then it follows from Myers \cite{myers}, Lemma 2.5, that the knot $K_n'$ formed from the join $t_n \vee t'$ has simple Haken exterior. Hence, by Thurston \cite{thurston}, it is hyperbolic. The reason for choosing the Kinoshita-Terasaka tangle is that the join $K_n'=t_n \vee t'$ has the same Alexander polynomial as $K_n$ (see Bleiler \cite{bleiler}, Theorem 2.2). This implies that $\{K_0',K_1',\ldots\}$ is an infinite set of distinct hyperbolic knots. 
\newline

The movie shown in Figure \ref{fig_kt_movie} proves that there is a ribbon concordance from $K_n$ to $K_n'=t_n \vee t'$. Hence for each $K_n'$, there is a ribbon concordance $A_n$ from $K$ to $K_n'$. By Theorem \ref{thm_levine_zemke}, $\text{Kh}(A_n)$ is injective. Hence, the linear map $\text{Kh}_R(A_n)$ embeds the given submodule $M$ into $\text{Kh}_R(K_n')$.
\newline

Lastly, suppose that $L$ is a multi-component link and fix one of its components $K$. Perform the construction above on $K$ to obtain knots $K_n$ that are components of a link $L_n$. Myers \cite{myers}, Proposition 6.1, again guarantees that we may choose an arc $\tilde{J}$ beginning and ending on $K_n$ so that the deleting $N(\widetilde{J})$ from the link exterior $X_n$ gives a simple Haken $3$-manifold. As before, replace the trivial tangle $(B,a_1 \cup a_2)$ with the $KT$-tangle to obtain a hyperbolic link $L_n'$. As in the previous case, only one component $K_n'$ has been modified. The set of links $\{L_0',L_1',\ldots\}$ is infinite because the components $K_0',K_1',\ldots$ have distinct Alexander polynomials.
\end{proof}

\begin{figure}[htb]
\begin{tabular}{c} \def\svgwidth{3.8in}
\begingroup%
  \makeatletter%
  \providecommand\color[2][]{%
    \errmessage{(Inkscape) Color is used for the text in Inkscape, but the package 'color.sty' is not loaded}%
    \renewcommand\color[2][]{}%
  }%
  \providecommand\transparent[1]{%
    \errmessage{(Inkscape) Transparency is used (non-zero) for the text in Inkscape, but the package 'transparent.sty' is not loaded}%
    \renewcommand\transparent[1]{}%
  }%
  \providecommand\rotatebox[2]{#2}%
  \newcommand*\fsize{\dimexpr\f@size pt\relax}%
  \newcommand*\lineheight[1]{\fontsize{\fsize}{#1\fsize}\selectfont}%
  \ifx\svgwidth\undefined%
    \setlength{\unitlength}{396.06736183bp}%
    \ifx\svgscale\undefined%
      \relax%
    \else%
      \setlength{\unitlength}{\unitlength * \real{\svgscale}}%
    \fi%
  \else%
    \setlength{\unitlength}{\svgwidth}%
  \fi%
  \global\let\svgwidth\undefined%
  \global\let\svgscale\undefined%
  \makeatother%
  \begin{picture}(1,0.57066434)%
    \lineheight{1}%
    \setlength\tabcolsep{0pt}%
    \put(0,0){\includegraphics[width=\unitlength]{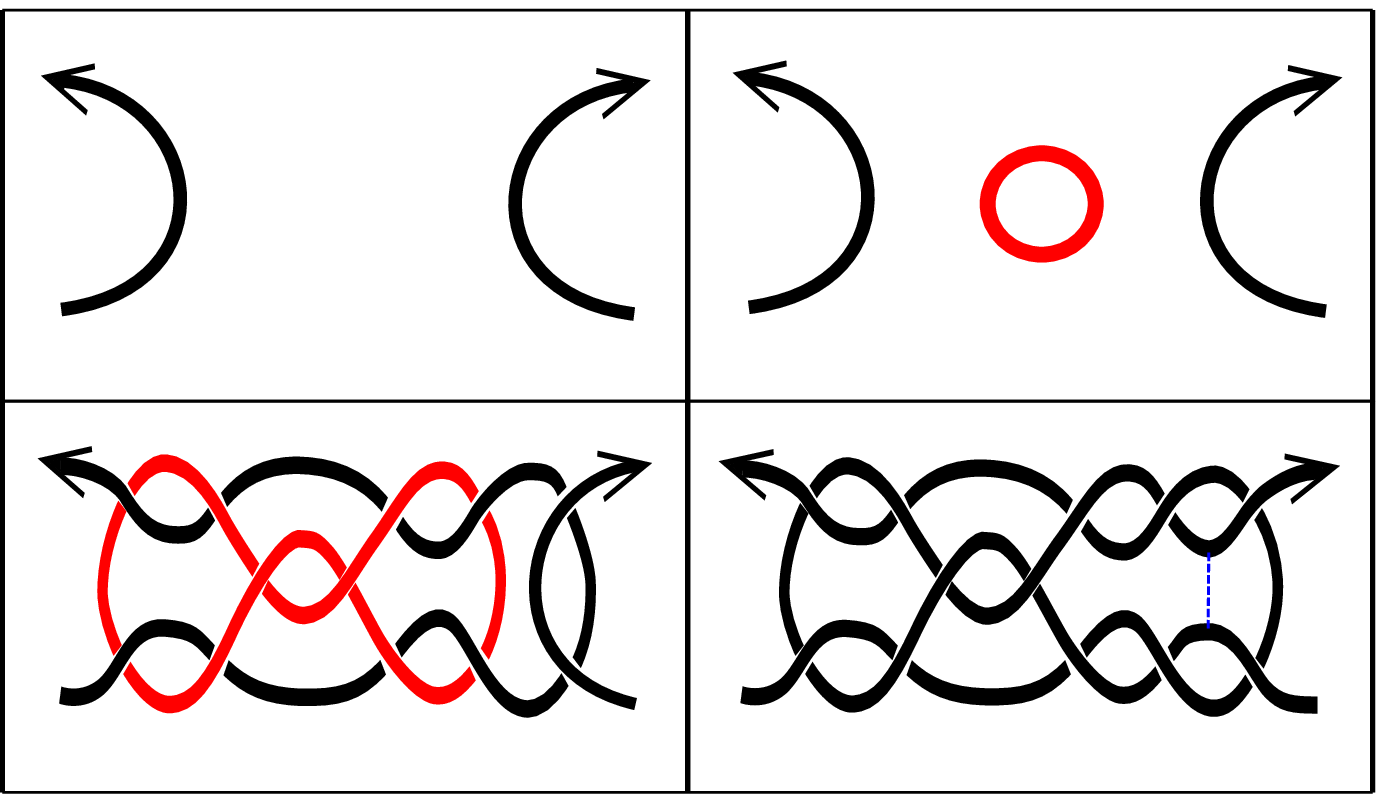}}%
    \put(0.05369327,0.3100066){\color[rgb]{0,0,0}\makebox(0,0)[lt]{\lineheight{40.54999924}\smash{\begin{tabular}[t]{l}$t=0$\end{tabular}}}}%
    \put(0.54679361,0.31301335){\color[rgb]{0,0,0}\makebox(0,0)[lt]{\lineheight{40.54999924}\smash{\begin{tabular}[t]{l}$t=\tfrac{1}{3}$\end{tabular}}}}%
    \put(0.04016309,0.02587258){\color[rgb]{0,0,0}\makebox(0,0)[lt]{\lineheight{40.54999924}\smash{\begin{tabular}[t]{l}$t=\tfrac{2}{3}$\end{tabular}}}}%
    \put(0.53627011,0.0273759){\color[rgb]{0,0,0}\makebox(0,0)[lt]{\lineheight{40.54999924}\smash{\begin{tabular}[t]{l}$t=1$\end{tabular}}}}%
  \end{picture}%
\endgroup%
 \end{tabular}
\caption{A ribbon concordance from the trivial tangle ($t=0$) to the $KT$-tangle ($t=1$). There is one birth (red unknot) and one saddle (blue dashed arc).} \label{fig_kt_movie}
\end{figure} 

In \cite{livingston}, Livingston proved that every knot is concordant to a prime satellite knot. As in Lemma \ref{lemma_hyp_subway}, this argument can be modified to show that corresponding to any knot with torsion in Khovanov homology, there is a prime satellite knot having the same torsion.

\begin{lemma}[Satellite Subway Coupon Lemma] \label{lemma_sat_subway} Let $M$ be an $R$-module. If there is a knot $K$ with $M$ a submodule of $\text{Kh}_R^{i,j}(K)$ for some grading $(i,j)$, then there are infinitely many distinct prime satellite knots $K'$ such that $M$ is a submodule of $\text{Kh}_R^{i,j}(K')$. 
\end{lemma}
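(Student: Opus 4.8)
The plan is to rerun the proof of Lemma \ref{lemma_hyp_subway}, with Livingston's construction of prime satellite knots in place of Myers's construction of hyperbolic knots. As there, set $J_0 = 6_1$ and, for $n \ge 0$, let $K_n = K \# J_0 \# \cdots \# J_0$ be the connected sum of $K$ with $n$ copies of $J_0$. Since $J_0$ is ribbon there is a ribbon concordance $K \le K_n$, and since $\Delta_{K_n}(t) = \Delta_K(t) \cdot (\Delta_{J_0}(t))^n$ the knots $K_n$ are pairwise distinct. It therefore suffices to produce, for each $n$, a prime satellite knot $K_n'$ together with a ribbon concordance $K_n \le K_n'$, in such a way that the $K_n'$ remain pairwise distinct; for then $K \le K_n \le K_n'$, and Theorem \ref{thm_levine_zemke} embeds $M$ as a submodule of $\text{Kh}_R^{i,j}(K_n')$.

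For the passage from $K_n$ to $K_n'$ I would realize Livingston's construction by a satellite operation over a ribbon companion. Fix a nontrivial ribbon knot $Q$ --- for definiteness $Q = 6_1$, which is both prime and ribbon --- and, following \cite{livingston}, choose a pattern $P_n$ of winding number one in a solid torus $V$ such that $P_n(U) = K_n$, where $U$ is the unknot and $P_n(U)$ denotes the image of $P_n$ under the standard unknotted embedding $V \hookrightarrow S^3$, and such that $(V, P_n)$ is a geometrically essential, prime pattern. Set $K_n' = P_n(Q)$. Then $K_n'$ is a satellite of $Q$ with essential companion torus $\partial N(Q)$, and the conditions imposed on $(V, P_n)$ make $K_n'$ a prime knot, so $K_n'$ is a prime satellite knot. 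To see that $K_n \le K_n'$, start with a ribbon concordance from $U$ to $Q$ --- one exists because $Q$ is ribbon --- and satellite it: place $P_n$ in the companion solid torus at the bottom of the movie and carry it along, so that each birth in the movie becomes the birth of a plain unknotted circle (not itself made into a satellite), and each band move of the movie becomes a band move attaching the corresponding plain circle to the unique strand of $P_n$ cut out by a meridian disk of the companion solid torus. Because $P_n$ has winding number one, this last move is an honest saddle, and the resulting movie is a ribbon concordance from $P_n(U) = K_n$ to $P_n(Q) = K_n'$.

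The winding-number-one satellite formula then gives $\Delta_{K_n'}(t) = \Delta_{P_n(U)}(t) \cdot \Delta_Q(t) = \Delta_{K_n}(t) \cdot \Delta_Q(t)$, so the $K_n'$ have pairwise distinct Alexander polynomials and $\{K_0', K_1', \ldots\}$ is an infinite family of distinct prime satellite knots, completing the argument. The step I expect to be the main obstacle is extracting from Livingston's proof a pattern $P_n$ with all of the required properties simultaneously --- winding number one, underlying knot $K_n$, geometric essentiality, and primeness of the pattern --- and, relatedly, checking carefully the Morse-theoretic bookkeeping that satelliting a ribbon concordance of the companion introduces no deaths, so that the output is again a ribbon concordance. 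Should it be more convenient, an alternative to the satellite-over-a-ribbon-companion device is to verify directly that the local tangle replacement used in Livingston's proof is itself realized by a single birth and a single saddle, exactly as in Figure \ref{fig_kt_movie}. Everything else --- the connected-sum computation giving $K \le K_n$, the Alexander polynomial computation, and the final invocation of Theorem \ref{thm_levine_zemke} --- is routine and parallel to the proof of Lemma \ref{lemma_hyp_subway}.
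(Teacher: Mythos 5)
Your proposal inverts the roles of pattern and companion relative to the paper's proof, and this inversion creates a genuine gap. The paper keeps $K_n = K \# J_0^{\# n}$ as the \emph{companion} and uses a single fixed pattern $P$ (Livingston's pattern, which has wrapping number $>1$). The ribbon concordance from $K_n$ to $P(K_n)$ is obtained by satelliting a ribbon concordance \emph{inside the solid torus} -- from the core circle to the pattern $P$, one birth and one saddle (Figure \ref{fig_sat conc_movie}) -- and then placing that solid torus as a tubular neighborhood of $K_n$; since the whole movie happens inside $V$, no subtlety about critical points arises. Primeness is then Livingston's Theorem 4.2, and the Alexander polynomial is unchanged, giving an infinite family.

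Your version instead makes $K_n$ the pattern ($P_n(U)=K_n$) and uses a fixed ribbon knot $Q$ as companion, and this is where the argument breaks down. For the satellite $P_n(Q)$ to be prime you need the pattern to have \emph{wrapping} number $>1$: a pattern of wrapping number $1$ produces $P_n(Q) = P_n(U) \# Q = K_n \# Q$, which is a nontrivial connected sum and hence never prime, regardless of primeness or geometric essentiality of $(V,P_n)$. But your ribbon concordance step explicitly requires the band to attach ``to the unique strand of $P_n$ cut out by a meridian disk,'' which only makes sense when the wrapping number is $1$. With wrapping number $>1$, a meridian disk meets the pattern in several points, a band to one strand does not implement the change of companion from $U$ to $Q$, and there is no clean reason the Morse-theoretic bookkeeping avoids deaths. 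So the two requirements -- primeness of $P_n(Q)$ and the one-band ribbon move -- are mutually exclusive in your setup, and the proposal does not close as written.

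Your flagged ``alternative'' (realize the local modification by one birth and one saddle, as in Figure \ref{fig_kt_movie}) is in fact the paper's actual route, but it works precisely because the paper keeps $K_n$ as the companion and performs the movie inside the pattern solid torus. If you pursue that alternative, drop the satellite-over-a-ribbon-companion device entirely and follow Livingston: fix his wrapping-number-$>1$ pattern $P$, show a ribbon concordance from the core of $V$ to $P$ by an explicit movie, satellite that movie over $K_n$, then use the Alexander polynomial skein computation to see $\Delta_{P(K_n)}=\Delta_{K_n}$, so the $P(K_n)$ are pairwise distinct.
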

\begin{proof} Let $K_n$ be as in the proof of Lemma \ref{lemma_hyp_subway}, so that $K$ is ribbon concordant to $K_n$ and $K,K_n$ are inequivalent. The knot $K_n$ will be the companion of a satellite knot $K_n'$ with the pattern $P$ as shown in Figure \ref{fig_sat conc_movie}, $t=1$. By Livingston \cite{livingston}, Theorem 5.1, this pattern $P$ is a nontrivial knot in the solid torus and has wrapping number greater than $1$. Then any satellite knot $P(K_n)$ is therefore a prime knot in $S^3$ (Livingston \cite{livingston}, Theorem 4.2). The movie shown in Figure \ref{fig_sat conc_movie} shows that there is a ribbon concordance from $K_n$ to $P(K_n)$ and hence from $K$ to $P(K_n)$. Then Theorem \ref{thm_levine_zemke} implies that $M$ is a submodule of $\text{Kh}_R^{i,j}(P(K_n))$. Using the skein relation for the Alexander polynomial, it is easy to prove that $P(K_n)$ and $K_n$ have the same Alexander polynomial (see e.g. Chrisman \cite{chrisman_chp}, Section 5.2). Therefore, the set $\{P(K_0),P(K_1),\ldots\}$ satisfies the conclusion of the theorem.
\end{proof}

\begin{figure}[htb]
\begin{tabular}{c} \def\svgwidth{5.5in}
\begingroup%
  \makeatletter%
  \providecommand\color[2][]{%
    \errmessage{(Inkscape) Color is used for the text in Inkscape, but the package 'color.sty' is not loaded}%
    \renewcommand\color[2][]{}%
  }%
  \providecommand\transparent[1]{%
    \errmessage{(Inkscape) Transparency is used (non-zero) for the text in Inkscape, but the package 'transparent.sty' is not loaded}%
    \renewcommand\transparent[1]{}%
  }%
  \providecommand\rotatebox[2]{#2}%
  \newcommand*\fsize{\dimexpr\f@size pt\relax}%
  \newcommand*\lineheight[1]{\fontsize{\fsize}{#1\fsize}\selectfont}%
  \ifx\svgwidth\undefined%
    \setlength{\unitlength}{475.28453802bp}%
    \ifx\svgscale\undefined%
      \relax%
    \else%
      \setlength{\unitlength}{\unitlength * \real{\svgscale}}%
    \fi%
  \else%
    \setlength{\unitlength}{\svgwidth}%
  \fi%
  \global\let\svgwidth\undefined%
  \global\let\svgscale\undefined%
  \makeatother%
  \begin{picture}(1,0.38034968)%
    \lineheight{1}%
    \setlength\tabcolsep{0pt}%
    \put(0,0){\includegraphics[width=\unitlength]{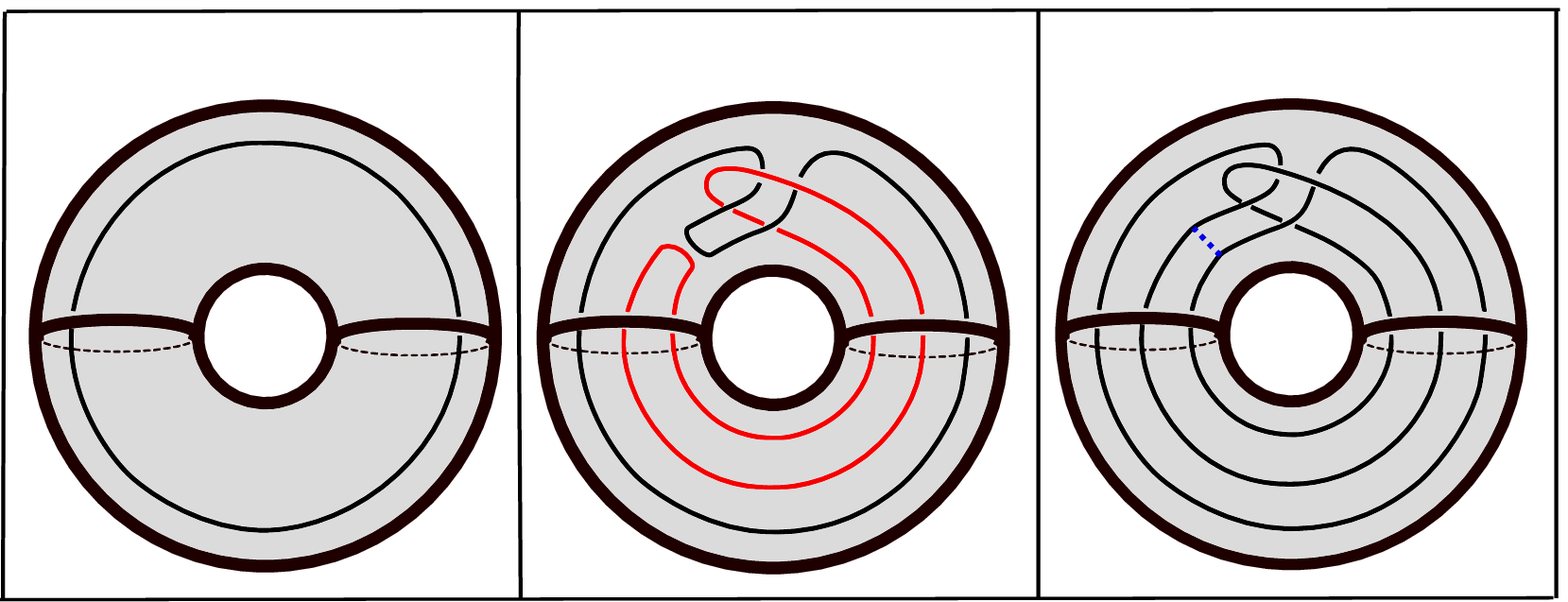}}%
    \put(0.02869143,0.33691693){\color[rgb]{0,0,0}\makebox(0,0)[lt]{\lineheight{40.54999924}\smash{\begin{tabular}[t]{l}$t=0$\end{tabular}}}}%
    \put(0.35026913,0.33930785){\color[rgb]{0,0,0}\makebox(0,0)[lt]{\lineheight{40.54999924}\smash{\begin{tabular}[t]{l}$t=\tfrac{1}{2}$\end{tabular}}}}%
    \put(0.68499678,0.3405033){\color[rgb]{0,0,0}\makebox(0,0)[lt]{\lineheight{40.54999924}\smash{\begin{tabular}[t]{l}$t=1$\end{tabular}}}}%
  \end{picture}%
\endgroup%
 \end{tabular}
\caption{A ribbon concordance from the core circle ($t=0$) to Livingston's pattern ($t=1$). There is one birth (red unknot) and one saddle (blue dashed arc).} \label{fig_sat conc_movie}
\end{figure}

\begin{remark} The constructions used in Lemmas \ref{lemma_hyp_subway} and \ref{lemma_sat_subway} apply to any link homology theory for which Theorem \ref{thm_levine_zemke} is true. Hence, they hold for knot Floer homology \cite{zemke} and $\mathfrak{sl}(n)$-homology \cite{caprau_ribbon}.
\end{remark} 
 
\begin{theorem} There are infinitely many hyperbolic links and infinitely many prime satellite knots having non-$\mathbb{Z}_2$ torsion in their Khovanov homology. In particular, the following are true.
\begin{enumerate}
    \item For $m=3,4,5,7,8,9,27$, there are infinitely many hyperbolic knots and infinitely many prime satellite knots having $\mathbb{Z}_m$ torsion in their Khovanov homology. 
    \item For $1\le k$, $1 \leq s \leq 23$, and $m=2^s,3^k,5^k,7^k$, there are infinitely many multi-component hyperbolic links having $\mathbb{Z}_m$ torsion in their Khovanov homology.
\end{enumerate}
\end{theorem}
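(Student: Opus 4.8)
The plan is to combine the two Subway Coupon Lemmas (Lemmas~\ref{lemma_hyp_subway} and~\ref{lemma_sat_subway}), taken with $R=\mathbb{Z}$ and $M=\mathbb{Z}_m$, with the catalogue of known torsion computations recalled in the introduction. One point deserves a little care: the lemmas are phrased with the word ``submodule,'' whereas the statement ``$\text{Kh}(L)$ has $\mathbb{Z}_m$ torsion'' means that $\mathbb{Z}_m$ is a \emph{direct summand} of some $\text{Kh}^{i,j}(L)$; since $\mathbb{Z}_2 \hookrightarrow \mathbb{Z}_4$ is not a summand, the submodule statement alone is not enough. Instead I would appeal directly to the last sentence of Theorem~\ref{thm_levine_zemke}: the ribbon concordances $A_n$ built in the proofs of the two lemmas make $\text{Kh}^{i,j}_{\mathbb{Z}}(K)$ a direct summand of $\text{Kh}^{i,j}_{\mathbb{Z}}(K_n')$ (respectively of $\text{Kh}^{i,j}_{\mathbb{Z}}(P(K_n))$), and a direct summand of a direct summand is a direct summand. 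Hence whenever $\mathbb{Z}_m$ is a summand of $\text{Kh}^{i,j}(K)$, it is a summand of the Khovanov homology of every one of the infinitely many hyperbolic knots and prime satellite knots produced from $K$.

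For part~(1) I would exhibit, for each $m\in\{3,4,5,7,8,9,27\}$, a \emph{knot} $K_m$ whose integral Khovanov homology is already known to contain $\mathbb{Z}_m$ as a direct summand, and then feed $K_m$ into both lemmas. Concretely, $T(5,6)$ handles $m=3$ and $m=5$ (Bar-Natan \cite{BN07}); $T(5,6)\#T(5,6)$ (or $T(9,10)$) handles $m=9$ and $T(5,6)\#T(5,6)\#T(5,6)$ (or $T(9,11)$) handles $m=27$ (Mukherjee \cite{Muk}, Sch\"utz \cite{schutz}); and the remaining values $m=4,7,8$ are realized by suitable torus knots whose Khovanov homology has been computed (via \cite{BN07,schutz}). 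Applying Lemma~\ref{lemma_hyp_subway} to each $K_m$ gives infinitely many pairwise distinct hyperbolic knots with $\mathbb{Z}_m$ torsion, and applying Lemma~\ref{lemma_sat_subway} gives infinitely many pairwise distinct prime satellite knots with $\mathbb{Z}_m$ torsion; distinctness inside each family is precisely the Alexander-polynomial argument already contained in the lemmas. Taking, say, $m=3$ yields the first, qualitative sentence of the theorem.

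For part~(2) I would instead start from the known \emph{multi-component} seed links: the links of \cite{MPSWY} with $\mathbb{Z}_{2^s}$ torsion for $1\le s\le 23$, and the links of \cite{MS} with $\mathbb{Z}_{p^k}$ torsion for $p=3,5,7$ and $k\ge 1$. Here only the multi-component version of Lemma~\ref{lemma_hyp_subway} applies: pick one component of such a seed link $L$ and run the construction of that lemma on it, obtaining infinitely many pairwise distinct hyperbolic links; since the construction alters a single component and leaves the number of components unchanged, the output is again a multi-component link, and the direct-summand form of Theorem~\ref{thm_levine_zemke} carries the $\mathbb{Z}_m$ summand along. This establishes part~(2), and parts~(1)--(2) together imply the opening sentence.

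I do not expect a genuine mathematical obstacle; the work is entirely in the input. The step most likely to need attention is ensuring that for \emph{every} value of $m$ listed in part~(1) there is a \emph{knot}, not merely a link, already known to carry $\mathbb{Z}_m$ torsion in Khovanov homology — for $m=4,7,8$ this forces one to invoke explicit torus-knot computations rather than the infinite families of the introduction, which are links. The remainder is bookkeeping: collect the seeds, invoke the two lemmas, and observe that infiniteness and pairwise inequivalence are automatic from the Alexander-polynomial arguments built into the lemmas.
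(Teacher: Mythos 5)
Your proposal takes the same route as the paper: combine Lemmas~\ref{lemma_hyp_subway} and~\ref{lemma_sat_subway} with the catalogue of known torsion computations (Bar-Natan, Mukherjee, Mukherjee--Przytycki--Silvero--Wang--Yang, Mukherjee--Sch\"utz, Sch\"utz), using knot seeds for part~(1) and multi-component link seeds for part~(2). The paper's proof is literally three sentences doing exactly this.

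Where you go beyond the paper is the submodule-versus-direct-summand point, and it is a genuine one. As stated, the two lemmas conclude only that $M$ is a \emph{submodule} of $\text{Kh}^{i,j}_R(L')$, whereas the paper's standing definition of ``$\text{Kh}(L)$ has $\mathbb{Z}_m$ torsion'' requires $\mathbb{Z}_m$ to be a \emph{direct summand} of some $\text{Kh}^{i,j}(L)$, and $\mathbb{Z}_2\hookrightarrow\mathbb{Z}_4$ shows these are not the same. The paper's proof of the theorem invokes only the lemmas as stated, so strictly speaking there is a small logical gap. Your fix is the correct one: the ribbon concordances $A_n$ constructed inside those lemmas satisfy the stronger conclusion of Theorem~\ref{thm_levine_zemke} (the source group is a direct summand of the target group at every bigrading), so if $\mathbb{Z}_m$ is a summand of $\text{Kh}^{i,j}(K)$ it is a summand of $\text{Kh}^{i,j}(K_n')$ and of $\text{Kh}^{i,j}(P(K_n))$. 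One could also say the lemmas should simply have been stated with ``direct summand'' in place of ``submodule,'' since their proofs deliver that. Either way, you noticed something the paper elided, and resolved it properly. Your caution about $m=4,7,8$ requiring explicit knot (not merely link) sources is also warranted, and matches the paper's own level of precision, which just points to the references without naming specific knots for those values.
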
 
\begin{proof} By Lemmas \ref{lemma_hyp_subway} and \ref{lemma_sat_subway}, it suffices to show that there exists some link with the desired torsion. For (1), knots with these torsion groups can be found in \cite{BN07}, \cite{Muk}, \cite{MPSWY}, and \cite{schutz}. For (2), multi-component links with these torsion groups can be found in \cite{MPSWY} or \cite{MS}.
\end{proof}

\begin{figure}[htb]
\begin{tabular}{cc} \def\svgwidth{2in}
\begingroup%
  \makeatletter%
  \providecommand\color[2][]{%
    \errmessage{(Inkscape) Color is used for the text in Inkscape, but the package 'color.sty' is not loaded}%
    \renewcommand\color[2][]{}%
  }%
  \providecommand\transparent[1]{%
    \errmessage{(Inkscape) Transparency is used (non-zero) for the text in Inkscape, but the package 'transparent.sty' is not loaded}%
    \renewcommand\transparent[1]{}%
  }%
  \providecommand\rotatebox[2]{#2}%
  \newcommand*\fsize{\dimexpr\f@size pt\relax}%
  \newcommand*\lineheight[1]{\fontsize{\fsize}{#1\fsize}\selectfont}%
  \ifx\svgwidth\undefined%
    \setlength{\unitlength}{181.9373942bp}%
    \ifx\svgscale\undefined%
      \relax%
    \else%
      \setlength{\unitlength}{\unitlength * \real{\svgscale}}%
    \fi%
  \else%
    \setlength{\unitlength}{\svgwidth}%
  \fi%
  \global\let\svgwidth\undefined%
  \global\let\svgscale\undefined%
  \makeatother%
  \begin{picture}(1,0.59520011)%
    \lineheight{1}%
    \setlength\tabcolsep{0pt}%
    \put(0,0){\includegraphics[width=\unitlength]{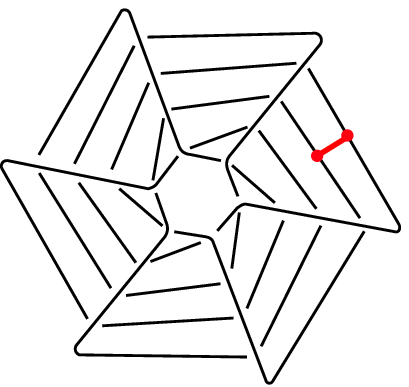}}%
    \put(0.9,0.6){\color[rgb]{0,0,0}\makebox(0,0)[lt]{\lineheight{40.54999924}\smash{\begin{tabular}[t]{l}$\widetilde{J}$\end{tabular}}}}%
  \end{picture}%
\endgroup%
 & \def\svgwidth{3in}
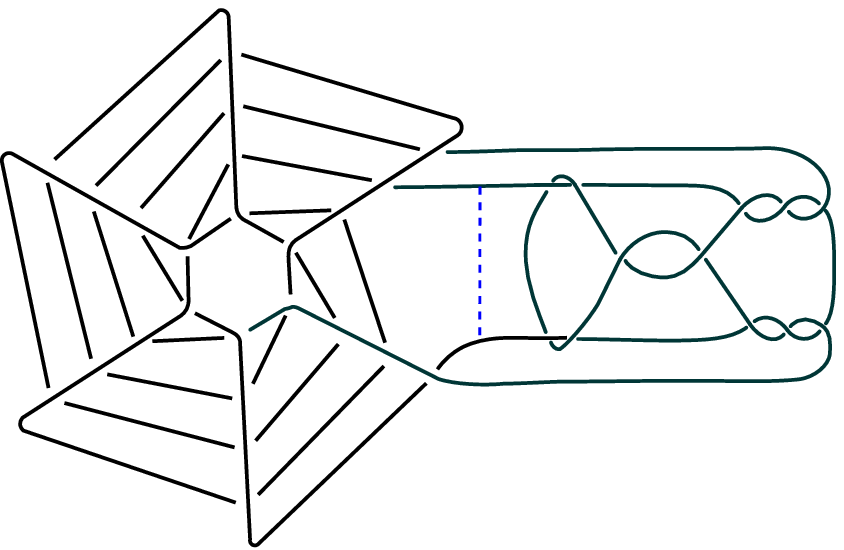 \end{tabular}
\caption{$T(5,6)$ (left) is ribbon concordant to the hyperbolic knot $K$ (right).} \label{fig_t56_plus}
\end{figure}

\begin{example} An example of a hyperbolic knot $K$ with odd torsion in its Khovanov homology is given in Figure \ref{fig_t56_plus}, right. According to SnapPy \cite{SnapPy}, the knot is hyperbolic with volume $\approx 20.9424081371$. The knot $K$ can be obtained from the torus knot $T(5,6)$ (Figure \ref{fig_t56_plus}, left) using the construction of Lemma \ref{lemma_hyp_subway}. The red arc in Figure \ref{fig_t56_plus} is the arc $\widetilde{J}$, as in Figure \ref{fig_nbhd}. The saddle occurring in the ribbon concordance (see Figure \ref{fig_kt_movie}) is shown as a blue dashed arc in Figure \ref{fig_t56_plus}, right. The Khovanov homology of $T(5,6)$ is given in Figure \ref{fig_t56} while that of $K$ is given in Figure \ref{fig_big_kh}. In both tables, we write $A_b$ in position $(i,j)$ of the table if $\mathbb{Z}^A_b=\mathbb{Z}_b \oplus \cdots \oplus \mathbb{Z}_b$ ($A$-times) is a direct summand of $\text{Kh}^{i,j}(L)$. For $\mathbb{Z}^A$, we simply write $A$. The shaded entries in Table \ref{fig_big_kh} correspond to the nontrivial bigradings in the Khovanov homology of $T(5,6)$, so that the conclusion of Theorem \ref{thm_levine_zemke} is evident. Observe that $K$ has both $\mathbb{Z}_3$ and $\mathbb{Z}_5$ torsion and hence the odd torsion is preserved.
\end{example}

\subsection*{Acknowledgments} Lemmas \ref{lemma_hyp_subway} and \ref{lemma_sat_subway} were appropriately discovered while the authors were walking to redeem a buy-one-get-one-free coupon from their nearest (now closed) \emph{Subway} restaurant (1952 N High St, Columbus, OH). The authors would like to thank \emph{Subway} for their generosity.

\

The computational data in this note was obtained using JavaKh-v2 written by Scott Morrison. It is an update of Jeremy Green’s JavaKh-v1 written under the supervision of Dror Bar-Natan.

\

The authors would like to thank Rhea Palak Bakshi, Carmen Caprau, and J\'{o}zef H. Przytycki for their valuable insights and suggestions. The second author is grateful to Hanna Makaruk and Robert Owczarek for organizing the AMS Special Sessions on Inverse Problems for many years and giving him the opportunity to participate.

\

The first author recognizes the support of The Ohio State University, Marion Campus, who provided funds and release time. The second author was supported by the American Mathematical Society and the Simons Foundation through the AMS-Simons Travel Grant.

\bibliographystyle{plain}
\bibliography{odd_KH}
\addresseshere

\hspace{1cm}
\newline

\begin{figure}[h]
\begin{minipage}[ht]{0.64\textwidth}
\begin{center}
\includegraphics[scale = 0.59]{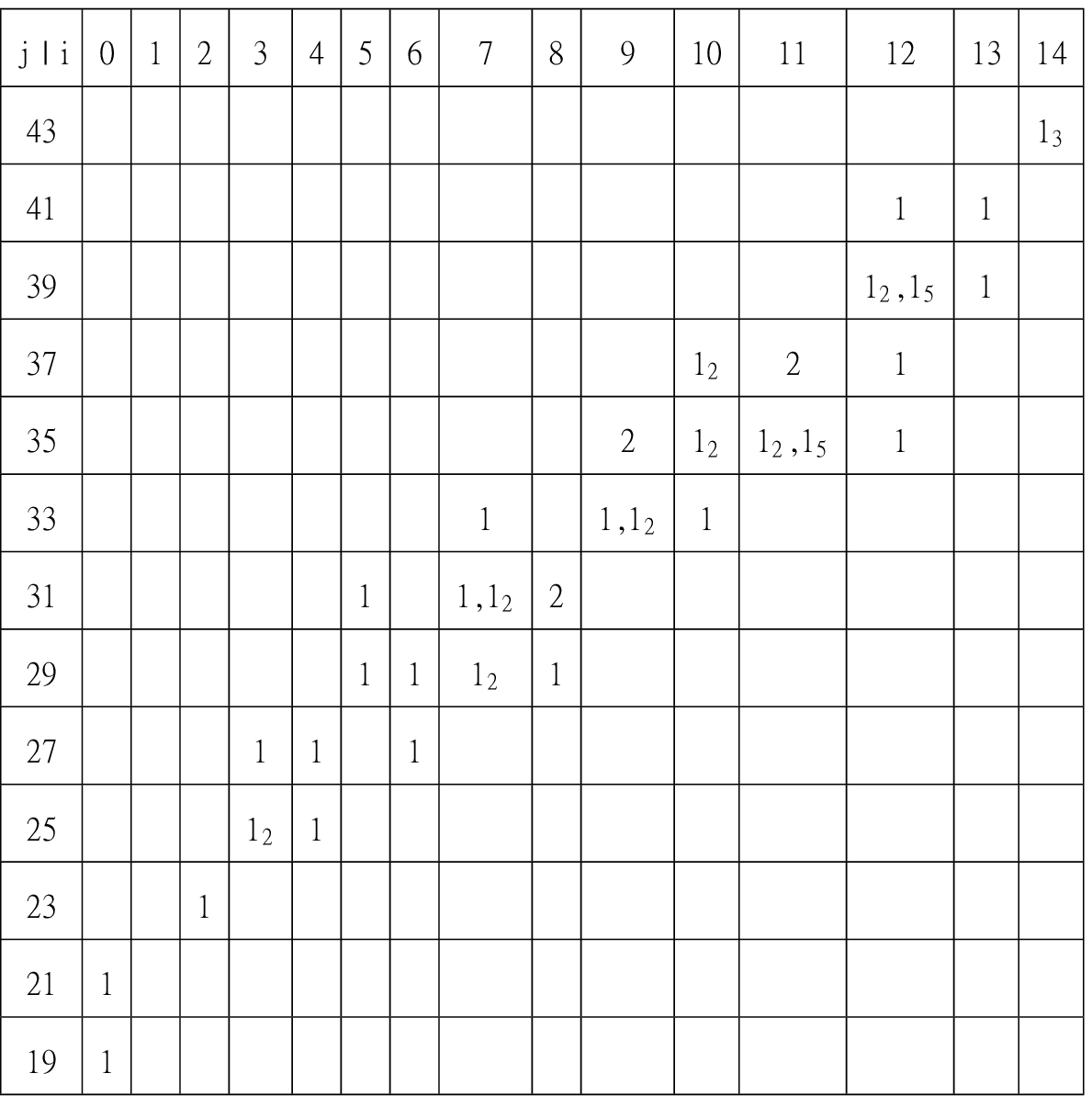}
\end{center}
\end{minipage} 
\begin{minipage}[ht]{0.35\textwidth}
\begin{center}
\def\svgwidth{2.1in}
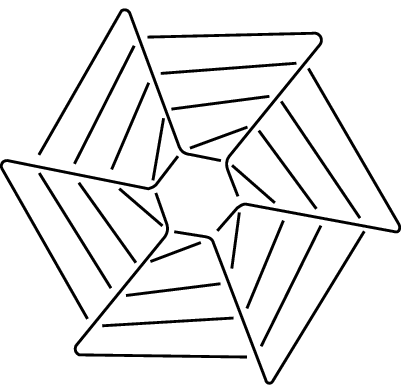
\end{center}
\end{minipage}
\caption{The Khovanov homology (left) of the $T(5,6)$ torus knot (right).} \label{fig_t56}
\end{figure}
\newpage
\begin{figure}[hb]
\rotatebox{90}{
\scalebox{.58}{\psfig{figure=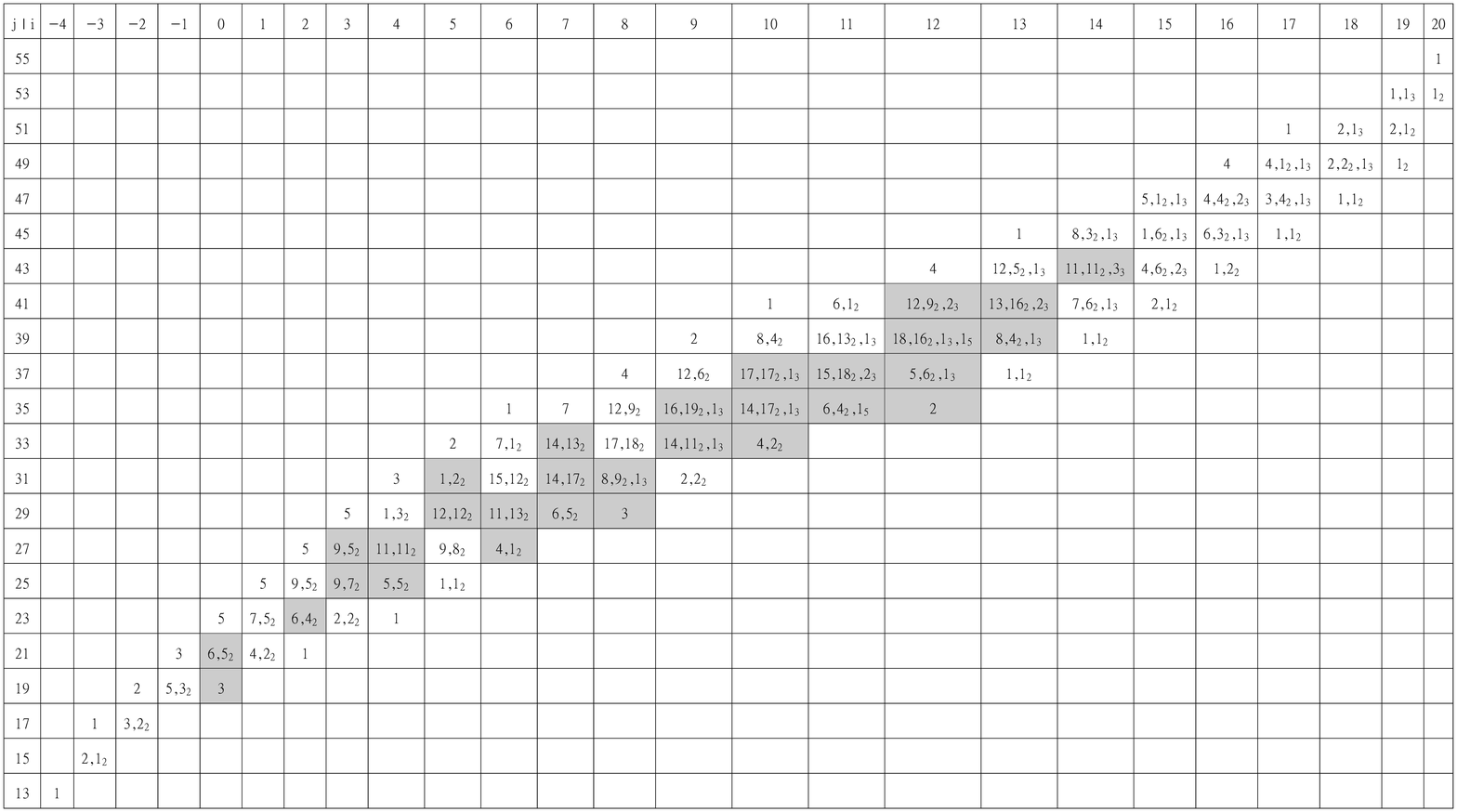}}}
\caption{The Khovanov homology of the knot from Figure \ref{fig_t56_plus}.} \label{fig_big_kh}
\end{figure}
\end{document}